\newtheorem{thm}{Theorem}
\newtheorem{lem}{Lemma}
\newtheorem{prop}{Proposition}
\newtheorem{conj}{Conjecture}
\theoremstyle{definition}
\newtheorem{Def}{Definition}
\newtheorem*{rem}{Remark}
\DeclareRobustCommand\longtwoheadrightarrow
\newcommand\rarrow[1][r]{\overset{#1}{\longrightarrow}}
\newcommand\homrarrow[1][r]{{\overset{#1}{\longtwoheadrightarrow}}}
\newcommand\nrarrow[1][r]{\overset{#1}{\longarrownot\longrightarrow}}
\newcommand\B{\mathrm{B}}
\newcommand\K{\mathcal{K}}
\newcommand\M{\mathcal{M}}
\tikzset{blowup/.style={draw, fill=white, circle, inner sep=0pt, minimum size=21pt}}
\tikzset{vert/.style={draw, fill=black, circle, inner sep=2pt}}
\title{Extremal and Ramsey results on graph blowups}
\author{Jacob Fox\thanks{Department of Mathematics, Stanford University, Stanford, CA 94305, USA. Email: \url{jacobfox@stanford.edu}. Research supported by a Packard Fellowship and by NSF award DMS-1855635.} \and Sammy Luo\thanks{Department of Mathematics, Stanford University, Stanford, CA 94305, USA. Email: \url{sammyluo@stanford.edu}. Research supported by NSF GRFP Grant DGE-1656518.} \and Yuval Wigderson\thanks{Department of Mathematics, Stanford University, Stanford, CA 94305, USA. Email: \url{yuvalwig@stanford.edu}. Research supported by NSF GRFP Grant DGE-1656518.}}
\begin{document}
\maketitle
\begin{abstract}
	Recently, Souza introduced blowup Ramsey numbers as a generalization of bipartite Ramsey numbers. For graphs $G$ and $H$, say $G\rarrow H$ if every $r$-edge-coloring of $G$ contains a monochromatic copy of $H$. Let $H[t]$ denote the $t$-blowup of $H$. Then the blowup Ramsey number of $G,H,r,$ and $t$ is defined as the minimum $n$ such that $G[n] \rarrow H[t]$. Souza proved upper and lower bounds on $n$ that are exponential in $t$, and conjectured that the exponential constant does not depend on $G$. We prove that the dependence on $G$ in the exponential constant is indeed unnecessary, but conjecture that some dependence on $G$ is unavoidable.

	An important step in both Souza's proof and ours is a theorem of Nikiforov, which says that if a graph contains a constant fraction of the possible copies of $H$, then it contains a blowup of $H$ of logarithmic size. We also provide a new proof of this theorem with a better quantitative dependence.
\end{abstract}
\section{Introduction}
A graph $G$ is called \emph{$r$-Ramsey} for a graph $H$, denoted $G\rarrow H$, if every $r$-edge-coloring of $G$ contains a monochromatic copy of $H$. Given a graph $H$ and an integer $t$, the \emph{$t$-blowup} of $H$, denoted $H[t]$, is the graph obtained from $H$ by replacing every vertex of $H$ by an independent set of order $t$, and replacing every edge of $H$ by a complete bipartite graph $K_{t,t}$ between the corresponding parts. Say that a copy of $H[t]$ in $G[n]$ is \emph{canonical} if it is the $t$-blowup of a copy of $H$ in $G$. Recently, Souza \cite{Souza} introduced the notion of \emph{blowup Ramsey numbers}, which are a natural generalization of several well-studied problems in Ramsey theory, such as that of bipartite Ramsey numbers.
\begin{Def}[Souza \cite{Souza}]
	Let $G,H$ be graphs and $r$ an integer such that $G \rarrow H$. For an integer $t$, define the \emph{blowup Ramsey number} $\B(G \rarrow H;t)$ to be the minimum $n$ such that every $r$-coloring of $G[n]$ contains a monochromatic canonical copy of $H[t]$. 
\end{Def}
Souza proved that these numbers exist and are finite, and further obtained an exponential upper bound on them.
\begin{thm}[Souza \cite{Souza}]\label{thm:souza-ub}
	Let $G,H$ be graphs and $r$ an integer such that $G \rarrow H$. Then there is a number $c=c(G,H,r)$ such that for every $t$,
	\[
		\B(G \rarrow H;t) \leq c^t.
	\]
\end{thm}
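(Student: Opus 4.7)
The plan is a two-step argument: first, combine the Ramsey property $G \rarrow H$ with averaging to fix a single color and a single labeled copy $H_0 \subseteq G$ that witness many monochromatic canonical copies of $H$ in $G[n]$; then invoke a multipartite form of Nikiforov's theorem to promote these many copies into a canonical copy of $H[t]$, provided $n$ is exponential in $t$.

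For the averaging step, I consider the $n^{|V(G)|}$ \emph{canonical copies of $G$} in $G[n]$, i.e., embeddings picking exactly one vertex from each blowup class. Each such copy inherits an $r$-edge-coloring from $G[n]$, and hence, by $G\rarrow H$, contains a monochromatic copy of $H$ as a subgraph of $G$. Pigeonholing over the $r$ colors and over the finitely many labeled copies of $H$ inside $G$, one obtains a color $\chi$ and a labeled copy $H_0 \subseteq G$ such that a positive constant fraction of the canonical $G$-copies have their restriction to $H_0$ entirely colored $\chi$. Integrating out the vertex choices in blowup classes outside $V(H_0)$ then yields $\Omega(n^{|V(H)|})$ canonical copies of $H_0$ inside $H_0[n] \subseteq G[n]$ that are monochromatic in $\chi$.

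For the second step, restrict to the color-$\chi$ subgraph $F$ of $H_0[n]$, viewed naturally as a $|V(H)|$-partite graph whose parts are the $n$-vertex blowup classes indexed by $V(H_0)$. By the previous paragraph, $F$ contains $\Omega(n^{|V(H)|})$ canonical copies of $H$. A multipartite form of Nikiforov's theorem then guarantees that $F$ contains a canonical copy of $H[t']$ with $t' = \Omega(\log n)$. Pulling back along $H_0[n] \subseteq G[n]$ produces a monochromatic canonical copy of $H[t']$ in $G[n]$, and choosing $c = c(G,H,r)$ large enough so that $n \geq c^t$ forces $t' \geq t$ completes the proof.

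The main obstacle is ensuring that the blowup produced by Nikiforov's theorem is \emph{canonical} in the sense of the paper, i.e., that its $i$-th part lies inside the blowup class of the $i$-th vertex of $H_0$ rather than sitting arbitrarily in the ambient graph. The plain statement of Nikiforov's theorem quoted in the abstract only provides some blowup of $H$, so one needs either a direct adaptation of Nikiforov's proof to the multipartite setting, or a black-box reduction that turns an arbitrary blowup into a canonical one. This step is also what drives the quantitative dependence of $c(G,H,r)$ on the parameters, since the constant in the $\Omega(\log n)$ bound from Nikiforov directly determines how large $c$ must be.
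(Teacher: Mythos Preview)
Your proposal is correct and is essentially Souza's own argument, which the paper cites but does not reproduce: Theorem~\ref{thm:souza-ub} is stated as a result from \cite{Souza}, and the paper's contribution is the refinement Theorem~\ref{thm:no-g-ub}. Your averaging-over-transversals step followed by a multipartite Nikiforov is exactly the route the paper attributes to Souza when it writes ``An important step in Souza's proof of Theorem~\ref{thm:souza-ub} is the following result of Nikiforov.'' The obstacle you flag about canonicity is not a real gap: Nikiforov's inductive proof, and the paper's Lemma~\ref{lem:nikiforov-variant}, are multipartite by construction and output canonical blowups.

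The paper's proof of the stronger Theorem~\ref{thm:no-g-ub}, which of course implies Theorem~\ref{thm:souza-ub}, proceeds differently. Instead of averaging, it applies the Duke--Lefmann--R\"odl weak regularity lemma (Lemma~\ref{lem:dlr}) to the $r$-colored $G[n]$ to obtain an $\varepsilon$-regular cylinder $W_1\times\dotsb\times W_m$; it then colors each edge $ij$ of $G$ by the majority color between $W_i$ and $W_j$, uses $G\rarrow H$ to locate a monochromatic copy of $H$ in this auxiliary coloring, and observes that each relevant pair has density at least $1/r$ in that color. Lemma~\ref{lem:nikiforov-variant} (a multipartite Nikiforov-type lemma taking edge densities rather than a copy count as input) then yields the canonical blowup directly. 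The payoff is that the density parameter $1/r$ fed into Lemma~\ref{lem:nikiforov-variant} is independent of $G$, so the exponential base $b$ in Theorem~\ref{thm:no-g-ub} depends only on $H$ and $r$; by contrast, your averaging route feeds $\eta$ of order $1/(r\cdot\#\{\text{labeled copies of }H\text{ in }G\})$ into Nikiforov, which is precisely why the resulting constant $c$ in Theorem~\ref{thm:souza-ub} depends on $G$.
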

Moreover, using the Lov\'asz Local Lemma, Souza showed that an exponential-type bound is necessary. Indeed, he proved that if $t$ is sufficiently large in terms of $G$ and $n \leq (c')^t$ for some constant $c'=c'(H,r)>1$, then there exists an $r$-edge-coloring of $G[n]$ with no monochromatic canonical copy of $H[t]$.

The exponential constant in Souza's upper bound depends on $G$, while the exponential constant in his lower bound does not depend on $G$. Souza conjectured that the dependence on $G$ in Theorem \ref{thm:souza-ub} is unnecessary. In this paper, our main result is that the exponential constant indeed does not depend on $G$, but our upper bound nevertheless has some dependence on $G$. More precisely, we prove the following result. 
\begin{thm}\label{thm:no-g-ub}
	Let $G,H$ be graphs and $r\geq 2$ an integer such that $G \rarrow H$. There exist constants $a=a(G,H,r)$ and $b=b(H,r)$ such that for every integer $t$,
	\[
		\B(G \rarrow H;t) \leq a \cdot b^t.
	\]
	Moreover, for $\gamma>0$ sufficiently small with respect to $r$, we may take $b=r^{(r+\gamma)^{|E(H)|-1}}$, so long as $a$ is sufficiently large with respect to $\gamma$.
\end{thm}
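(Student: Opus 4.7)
My plan is to combine the Ramsey hypothesis $G\rarrow H$ with a quantitative Nikiforov-type blowup-extraction lemma. Given an arbitrary $r$-coloring of $G[n]$, I would locate a monochromatic canonical $H[t]$ in two stages: first a supersaturation argument producing many monochromatic transversal copies of $H$, then an application of the (improved) Nikiforov theorem to extract a large canonical blowup.

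For the supersaturation stage, call a copy of a graph in $G[n]$ \emph{transversal} if it uses exactly one vertex from each blowup class. Each of the $n^{|V(G)|}$ transversal copies of $G$ in $G[n]$ contains, by hypothesis, a monochromatic copy of $H$, which is itself a transversal copy of $H$ in $G[n]$. Each transversal copy of $H$ is contained in exactly $n^{|V(G)|-|V(H)|}$ transversal copies of $G$, so double counting yields at least $n^{|V(H)|}$ monochromatic transversal copies of $H$ in $G[n]$. Pigeonholing over the $r$ colors and the finitely many copies of $H$ in $G$, I fix a color $i$ and an embedding $\phi\colon V(H)\hookrightarrow V(G)$ such that at least $\alpha\,n^{|V(H)|}$ transversal copies of $H$ projecting to $\phi$ are monochromatic in color $i$, where $\alpha=\alpha(G,H,r)>0$.

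For the extraction stage, pass to the $|V(H)|$-partite subgraph $G'$ of $G[n]$ consisting of color-$i$ edges spanning the parts $V_{\phi(u)}, V_{\phi(v)}$ for $uv\in E(H)$. The monochromatic transversal copies of $H$ projecting to $\phi$ correspond exactly to the partition-respecting copies of $H$ in $G'$; there are at least $\alpha n^{|V(H)|}$ of these, i.e., a constant fraction of all such copies possible. Applying the improved Nikiforov theorem (in a form adapted to the multipartite setting, as developed elsewhere in the paper) extracts a partition-respecting blowup $H[t]$ in $G'$, which by construction is a monochromatic canonical $H[t]$ in $G[n]$. The quantitative bound should take the form $t\geq \log_b n - c(\alpha,H,r)$ with $b=b(H,r)$ depending only on $H$ and $r$; rearranging gives $\B(G\rarrow H;t)\leq a\cdot b^t$ with $a=b^c$ absorbing the density-dependent additive term into a constant depending on $G,H,r$.

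The main obstacle is the quantitative calibration of the extraction step, in particular verifying the explicit bound $b=r^{(r+\gamma)^{|E(H)|-1}}$. The tower-like exponent suggests an inductive proof of the Nikiforov-type lemma on the edges of $H$: starting from $G'$, process the edges of $H$ one at a time, at each step refining the current parts so as to force one more edge of the canonical structure to become fully monochromatic in color $i$ at the cost of shrinking the parts from size $N$ to size roughly $N^{1/(r+\gamma)}$. After $|E(H)|-1$ such refinements of parts initially of size $n$, we reach part size $n^{1/(r+\gamma)^{|E(H)|-1}}$, and a final bipartite Ramsey step (which needs the part size to be at least $r^{t+o(t)}$) yields a monochromatic canonical $H[t]$ provided $n\geq r^{t(r+\gamma)^{|E(H)|-1}}=b^t$. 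Ensuring that enough partition-respecting monochromatic density is preserved at every refinement, and that the $(r+\gamma)$ factor per step (reflecting the K\H{o}v\'ari--S\'os--Tur\'an exponent for upgrading bipartite density to a monochromatic $K_{s,s}$ in a single color class) is tight, is the technical heart of the proof; the slack $\gamma$ absorbs lower-order polynomial overhead in these steps and is why $a$ must be taken sufficiently large in terms of $\gamma$.
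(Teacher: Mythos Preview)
Your supersaturation step is fine and indeed recovers Souza's original argument, but it does not yield the theorem. The density $\alpha$ you obtain by pigeonholing over colors and over embeddings $\phi$ depends on $G$ (specifically on the number of copies of $H$ in $G$), and any Nikiforov-type extraction applied to a $k$-partite graph with $H$-density $\alpha$ produces $H[t]$ with $t=\lambda(\alpha,H)\log n$ where the multiplicative constant $\lambda$ genuinely depends on $\alpha$. Your assertion that the bound ``should take the form $t\geq \log_b n - c(\alpha,H,r)$ with $b=b(H,r)$'' is unjustified and in fact false: for random graphs $G(n,p)$ with small $p$, the $H$-density is $\approx p^{|E(H)|}$ and the largest $H[t]$ has $t=\Theta(c(p)\log n)$ with $c(p)\to 0$ as $p\to 0$, so the density cannot be pushed into an additive term. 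Consequently your argument reproduces $b=b(G,H,r)$, which is exactly Theorem~\ref{thm:souza-ub}, not Theorem~\ref{thm:no-g-ub}. The edge-by-edge refinement you sketch in the last paragraph does not fix this: you are still working inside $G'$, whose density already carries the $G$-dependence, and moreover processing edges one at a time via K\H{o}v\'ari--S\'os--Tur\'an cannot produce $H[t]$ when $H$ has cycles (completing one bipartite pair to a $K_{s,s}$ gives no control over the other pairs incident to the same parts).

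The missing idea is to decouple the $G$-dependence from the edge densities fed into the extraction lemma. The paper does this by applying the weak regularity lemma (Lemma~\ref{lem:dlr}) to the $r$-colored $m$-partite graph $G[n]$, obtaining a regular cylinder $W_1\times\cdots\times W_m$ with $|W_i|\geq\beta n$; it then defines an auxiliary $r$-coloring of $G$ itself by giving edge $ij$ the majority color in $W_i\times W_j$. The Ramsey hypothesis $G\rarrow H$ is applied to \emph{this} coloring, yielding a monochromatic copy of $H$ on some $k$ parts, and by construction every edge of this copy has density at least $1/r$ in the chosen color between the corresponding $W_i$'s. Now Lemma~\ref{lem:nikiforov-variant} is invoked with all $p_{ij}=1/r$, a constant independent of $G$, giving $t\approx r^{1-|E(H)|}\log(\beta n)/\log r$. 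The $G$-dependence lives entirely in $\beta$ (hence in $a=1/\beta$), while $b=r^{(r+\gamma)^{|E(H)|-1}}$ comes solely from $p=1/r$.
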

This result shows that if we are only interested in the exponential rate of growth of $\B(G \rarrow H;t)$ as a function of $t$, then indeed the choice of $G$ does not matter. However, for fixed $t$, the upper bound in Theorem \ref{thm:no-g-ub} does depend on $G$, and we believe that this dependence is in fact necessary for some $H$; for more details, see the concluding remarks. 

An important step in Souza's proof of Theorem \ref{thm:souza-ub} is the following result of Nikiforov, which says that a graph with many copies of $H$ must contain a blowup of $H$ of logarithmic size.
\begin{thm}[Nikiforov \cite{Nikiforov1,Nikiforov2}]\label{thm:nikiforov-original}
	For every $\eta>0$ and every graph $H$ on $k$ vertices, there exists a constant $\lambda>0$ such that the following holds for all $n$ sufficiently large. Let $G$ be a graph on $n$ vertices containing at least $\eta n^k$ copies of $H$. Then $G$ contains a blowup $H[t]$, where $t=\lambda \log n$. Moreover, one can take $\lambda=\eta^k$ if $H$ is a clique and $\lambda=\eta^{k^2}$ if $H$ is an arbitrary graph.
\end{thm}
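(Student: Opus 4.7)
The plan is to reduce the theorem to a $k$-partite $k$-uniform hypergraph embedding problem, and then to prove the embedding result via iterated dependent random choice.

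For the reduction, I would fix a labeling $V(H)=\{v_1,\ldots,v_k\}$ and construct an auxiliary $k$-partite $k$-uniform hypergraph $\mathcal{A}$ with vertex parts $V_1=\cdots=V_k=V(G)$, whose hyperedges are the tuples $(u_1,\ldots,u_k)\in V_1\times\cdots\times V_k$ for which $v_i\mapsto u_i$ is an embedding of $H$ into $G$. Since $G$ contains at least $\eta n^k$ copies of $H$, and each unlabeled copy contributes $|\mathrm{Aut}(H)|\geq 1$ hyperedges, the hypergraph $\mathcal{A}$ has density at least $\eta$. The crucial observation is that a complete $k$-partite sub-hypergraph of $\mathcal{A}$ with parts $U_1,\ldots,U_k$ each of size $t$ is exactly a copy of $H[t]$ in $G$: every transversal $(u_1,\ldots,u_k)$ with $u_i\in U_i$ yields an $H$-copy in $G$ with the correct labeling. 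So the theorem reduces to showing that any $k$-partite $k$-uniform hypergraph on parts of size $n$ and density at least $\delta$ contains a complete $k$-partite sub-hypergraph with parts of size $\lambda\log n$.

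For this hypergraph embedding lemma, I would proceed by induction on $k$. The base case $k=2$ is exactly the K\H{o}v\'ari--S\'os--Tur\'an theorem. For the inductive step, apply dependent random choice: pick a random subset $T\subseteq V_k$ of a carefully chosen size $s$, and form the $(k-1)$-uniform link hypergraph $\mathcal{A}_T$ on $V_1\times\cdots\times V_{k-1}$ whose hyperedges are those $(k-1)$-tuples that form a hyperedge of $\mathcal{A}$ together with every vertex of $T$. By convexity applied to $x\mapsto x^s$, the expected density of $\mathcal{A}_T$ is at least $\delta^s$, so some choice of $T$ achieves this density; applying the induction hypothesis produces $U_1,\ldots,U_{k-1}$, and setting $U_k=T$ completes the blowup.

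The main obstacle is the quantitative bookkeeping needed to achieve $\lambda=\eta^k$ for cliques and $\lambda=\eta^{k^2}$ in general. The difficulty is that at each level of the recursion the density shrinks from $\delta$ to $\delta^s$, so a uniform choice $s=t$ across all $k$ levels only yields $t=\Theta((\log n)^{1/(k-1)})$, far from the claimed logarithmic bound. To recover $t=\Theta(\log n)$ with the stated exponents, the sampling sizes $s$ at successive levels must be chosen non-uniformly and the K\H{o}v\'ari--S\'os--Tur\'an base case must be invoked at the right density. The improvement from $\eta^{k^2}$ to $\eta^k$ in the clique case exploits the full $S_k$-symmetry of $\mathcal{A}$ when $H=K_k$, which allows one to symmetrize the recursion and avoid the labeling loss that arises in the general case. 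Carrying out this optimization and verifying the claimed exponents is the most technical step of the argument.
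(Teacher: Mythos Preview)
Your reduction to finding a complete $k$-partite sub-hypergraph in the $k$-uniform hypergraph $\mathcal{A}$ of labeled $H$-copies is valid, but the hypergraph embedding lemma you then aim to prove is false for $k\geq 3$. A random $k$-partite $k$-uniform hypergraph on parts of size $n$ with constant edge probability $\eta$ contains, with high probability, no complete $k$-partite sub-hypergraph with parts larger than $C_\eta(\log n)^{1/(k-1)}$: the expected number of such sub-hypergraphs with parts of size $t$ is at most $n^{kt}\eta^{t^k}$, which drops below $1$ once $t^{k-1}>k\log n/\log(1/\eta)$. This matches Erd\H{o}s's classical extremal bound, so the $(\log n)^{1/(k-1)}$ barrier you hit with uniform sampling is not an artifact of the method. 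No choice of sampling sizes, uniform or otherwise, can push past it, because the target statement is simply false for arbitrary dense hypergraphs. The hypergraph $\mathcal{A}$ carries extra structure inherited from the graph $G$, and your recursion on uniformity discards exactly that structure.

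The paper's proof (and Nikiforov's original) exploits the graph structure by building the blowup one vertex class of $H$ at a time rather than one uniformity level of $\mathcal{A}$ at a time. After passing to an $\varepsilon$-regular $k$-partite piece of $G$ via the weak regularity lemma and the counting lemma, one maintains a matching $M_i$ of $t_i$ ``good'' copies of $H_i=H[\{1,\ldots,i\}]$, each having roughly the expected number of extensions into every remaining part. To pass from $H_i[t_i]$ to $H_{i+1}[t_{i+1}]$ one looks at the bipartite graph between $M_i$ and $W_{i+1}$ recording which good copies extend through which vertices; for $i=1$ this is handled by K\H{o}v\'ari--S\'os--Tur\'an, and for $i\geq 2$ by a Jensen/pigeonhole step. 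The crucial gain is that this pigeonhole ranges over $t_{i+1}$-subsets of $M_i$, a set of size $t_i=O(\log n)$, so there are at most $2^{t_i}\leq n^{1-\delta}$ such subsets, and each $t_i$ can remain of order $\log n$. In your scheme the analogous averaging is over $s$-subsets of $V_k$, a set of size $n$, which is why you lose a factor of $\log n$ at every level of the recursion.
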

As a consequence of our main technical result, whose proof is inspired by Nikiforov's original proof but further adds ideas from graph regularity, we provide a new proof of Theorem \ref{thm:nikiforov-original} with a better quantitative dependence between $\lambda$ and $\eta$. Specifically, we prove that one can take $\lambda=\eta^{1-1/|E(H)|+o(1)}$ in Theorem \ref{thm:nikiforov-original}; see Section \ref{sec:nikiforov} for details.

The paper is organized as follows. In Section \ref{sec:lemmas}, we state and prove several technical lemmas, related to regularity of graphs, that we will need in the proof of Theorem \ref{thm:no-g-ub}. In Section \ref{sec:nikiforov}, we use these lemmas to state and prove our stronger version of Theorem \ref{thm:nikiforov-original}. In Section \ref{sec:upper-bound}, we again use these lemmas to prove Theorem \ref{thm:no-g-ub}. We end with some concluding remarks. For the sake of clarity of presentation, we systematically omit floor and ceiling signs whenever they are not crucial. All logarithms in this paper are base $e$ unless otherwise stated.

\section{Tools from regularity theory}\label{sec:lemmas}
Our first technical result is the weak regularity lemma of Duke, Lefmann, and R\"odl \cite{DuLeRo}. In fact, we will use a generalization of it due to Fox and Li \cite{FoLi} which is well-adapted for dealing with colorings, as opposed to single graphs. The main advantage of their result over that of Duke, Lefmann, and R\"odl is that the bounds do not depend on the number of colors. Before stating it, we will need to recall some standard terminology.
\begin{Def}
	Let $\varepsilon>0$ be a parameter, and let $X,Y$ be vertex subsets of a graph $F$. Let $e(X,Y)$ denote the number of pairs in $X \times Y$ that are edges in $F$, and let $d(X,Y)=e(X,Y)/(|X||Y|)$ denote the \emph{edge density} between $X$ and $Y$. We say that the pair $(X,Y)$ is \emph{$\varepsilon$-regular} if for every $X' \subseteq X, Y' \subseteq Y$ with $|X'| \geq \varepsilon |X|, |Y'| \geq \varepsilon |Y|$, we have that
	\[
		|d(X,Y)-d(X',Y')|<\varepsilon.
	\]
	Suppose now that $F$ is $m$-partite, with $m$-partition $V=V_1 \sqcup \dotsb \sqcup V_m$. A \emph{cylinder} $K$ is a set of the form $W_1 \times \dotsb W_m$, where $W_i \subseteq V_i$ for all $i \in [m]$. For such a cylinder $K$, let $V_i(K)=W_i$. We say that $K$ is \emph{$\varepsilon$-regular} if $(W_i,W_j)$ is $\varepsilon$-regular for all $1 \leq i<j \leq m$. A \emph{cylinder partition} $\K$ is a partition of $V_1 \times \dotsb \times V_m$ into cylinders, and we say that $\K$ is \emph{$\varepsilon$-regular} if at most an $\varepsilon$-fraction of the $m$-tuples $(v_1,\ldots,v_m) \in V_1 \times \dotsb \times V_m$ are not in $\varepsilon$-regular cylinders of $\K$. 
\end{Def}
\begin{lem}[Duke--Lefmann--R\"odl \cite{DuLeRo}, Fox--Li {\cite[Theorem 7.2]{FoLi}}]\label{lem:dlr}
	Let $r\geq 1,m \geq 2$ be integers, $0<\varepsilon<\frac 12$ a parameter, and define $\beta=\varepsilon^{m^2 \varepsilon^{-5}}$. Suppose that $F=(V,E)$ is an $m$-partite graph with $m$-partition $V_1 \sqcup \dotsb \sqcup V_m$ whose edges are $r$-colored, so that $E=E(F_1)\sqcup\dotsb \sqcup E(F_r)$. Then there exists a cylinder partition $\K$ of $V_1 \times \dotsb \times V_m$ into at most $4^{m^2 \varepsilon^{-5}}$ cylinders that is $\varepsilon$-regular in each of the graphs $F_1,\ldots,F_r$. Moreover, for each $K \in \K$ and $i \in [m]$, we have that $|V_i(K)| \geq \beta |V_i|$. 
\end{lem}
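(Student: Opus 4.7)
The plan is to prove this via an energy increment argument in the style of Szemer\'edi and Duke--Lefmann--R\"odl, with the crucial modification that keeps the bound independent of $r$. I define the energy of a cylinder partition $\mathcal{K}$ of $V_1 \times \cdots \times V_m$ by
\[
q(\mathcal{K}) = \sum_{K \in \mathcal{K}} \mu(K) \sum_{1 \leq a < b \leq m} \sum_{i=1}^r d_{F_i}(V_a(K), V_b(K))^2,
\]
where $\mu(K) = \prod_a |V_a(K)|/|V_a|$ is the measure of $K$. The key observation removing the $r$-dependence is elementary: since each $d_{F_i}(X,Y) \in [0,1]$ we have $d_{F_i}(X,Y)^2 \leq d_{F_i}(X,Y)$, and since the colors partition the edges of $F$ we have $\sum_i d_{F_i}(X,Y) = d_F(X,Y) \leq 1$; hence $\sum_i d_{F_i}(X,Y)^2 \leq 1$ and therefore $q(\mathcal{K}) \leq \binom{m}{2}$ regardless of $r$.

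Next I establish the standard energy increment lemma: if a cylinder $K$ has an $\varepsilon$-irregular pair $(V_a(K), V_b(K))$ in some color $F_i$, witnessed by $X \subseteq V_a(K), Y \subseteq V_b(K)$ with $|X| \geq \varepsilon|V_a(K)|$ and $|Y| \geq \varepsilon|V_b(K)|$, then splitting $K$ into the four sub-cylinders obtained by partitioning $V_a(K) = X \sqcup (V_a(K) \setminus X)$ and $V_b(K) = Y \sqcup (V_b(K) \setminus Y)$ (and leaving the other coordinates unchanged) yields a refinement whose energy exceeds that of $\mathcal{K}$ by at least $\mu(K)\varepsilon^4$. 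This follows from the Cauchy--Schwarz defect inequality applied to the pair $(a,b)$ in color $F_i$, using that the contributions from all other colors and all other pairs of coordinates can only increase under refinement.

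The main step is the iterative refinement: start with the trivial partition $\mathcal{K}_0 = \{V_1 \times \cdots \times V_m\}$ and at each step check whether the current partition is $\varepsilon$-regular in every color $F_i$. If not, there is some $F_i$ for which the cylinders that are $\varepsilon$-irregular in $F_i$ have total measure strictly greater than $\varepsilon$, and I refine each of these along a witnessing pair. The per-cylinder increment $\mu(K)\varepsilon^4$ sums to a total increment of at least $\varepsilon \cdot \varepsilon^4 = \varepsilon^5$, so the process terminates after $T \leq m^2 \varepsilon^{-5}$ iterations. Each iteration splits each refined cylinder into at most $4$ parts, giving a final cylinder count of at most $4^T \leq 4^{m^2 \varepsilon^{-5}}$; and since each refinement shrinks the relevant part of a cylinder by a factor of at least $\varepsilon$, every cylinder in the final partition satisfies $|V_a(K)| \geq \varepsilon^T |V_a| = \beta |V_a|$.

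The main obstacle---and the point of departure from the classical single-graph proof---is the $r$-independence, and the entire saving comes from the elementary bound $\sum_i d_{F_i}^2 \leq 1$ in the definition of $q$; without it one would lose a factor of $r$ in the energy bound and therefore in the exponent. The remaining technical work---verifying the defect inequality with the stated power of $\varepsilon$, handling the refinement of multiple cylinders in a single iteration without double-counting, and simultaneously tracking the cylinder count and minimum part size across all $T$ iterations---is routine once the right energy function is in place.
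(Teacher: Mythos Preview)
The paper does not supply its own proof of this lemma; it is quoted from \cite{DuLeRo} and \cite{FoLi} and used as a black box. Your energy-increment sketch is precisely the argument of Fox--Li, and you have correctly isolated the single idea that yields the $r$-independence: since the colours partition the edge set, $\sum_i d_{F_i}(X,Y)^2 \le \sum_i d_{F_i}(X,Y) \le 1$, so the energy is bounded by $\binom{m}{2}$ regardless of $r$, and hence the number of iterations is at most $\binom{m}{2}/\varepsilon^5 \le m^2\varepsilon^{-5}$.

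One technical point is worth flagging. Your assertion that ``each refinement shrinks the relevant part of a cylinder by a factor of at least $\varepsilon$'' is not automatic: the witness $X\subseteq V_a(K)$ satisfies $|X|\ge \varepsilon|V_a(K)|$, but its complement $V_a(K)\setminus X$ need not, so one of the four sub-cylinders could have an arbitrarily small $a$-coordinate. In the cited proofs this is handled by a slightly more careful refinement step (or by tracking the induced partitions of the $V_a$ rather than individual cylinder coordinates); it is indeed routine, but it does not follow from the sentence you wrote. With that caveat, your outline matches the published proof.
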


We will use this result in conjunction with our main technical lemma below. If $V(H)=k$ and $\Gamma$ is a $k$-partite graph with parts $W_1,\ldots,W_k$, we say a copy of $H[t]$ in $\Gamma$ is \emph{canonical} if all copies of vertex $i \in V(H)$ are in part $W_i$ of $\Gamma$. This generalizes the earlier definition of canonical copies in case $\Gamma$ is a blowup $G[n]$. 
\begin{lem}\label{lem:nikiforov-variant}
	Let $H$ be a graph with $V(H)=[k]$, where we suppose that $\{1,2\} \in E(H)$. For every $ij \in E(H)$, let $p_{ij} \in (0,1]$ be a real number, with $p_{12}\leq \frac{1}{2}$, and let $0<\alpha<\prod_{ij \in E(H)}p_{ij}$ be another parameter. Then the following holds for sufficiently large $n$. Suppose that $\Gamma$ is a $k$-partite graph with $k$-partition $W_1 \sqcup \dotsb \sqcup W_k$, with $|W_i| \geq n$ for all $i$. Suppose that whenever $ij \in E(H)$, the pair $(W_i,W_j)$ is $\frac{\alpha^2}{8k^2}$-regular with $d(W_i,W_j) \geq p_{ij}$. Then $\Gamma$ contains a canonical copy of $H[t]$, where
	\[
		t=\left({\prod_{ij \in E(H) \setminus \{1,2\}}}p_{ij}-\alpha\right) \frac{\log n}{\log \frac 1{p_{12}}} .
	\]
\end{lem}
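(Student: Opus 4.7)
The plan is to build the required blowup in two stages. In the first stage, I would find sets $U_3, \ldots, U_k$, each of size $t$, forming a canonical blowup of the induced subgraph $H[\{3,\ldots,k\}]$, while controlling the common neighborhoods $W_1^* \subseteq W_1$ and $W_2^* \subseteq W_2$ (the vertices adjacent to the appropriate elements of $\bigcup_{i \geq 3} U_i$, namely those indexed by $j \geq 3$ with the relevant $1j$ or $2j$ in $E(H)$). In the second stage, I would apply the classical Kővári–Sós–Turán theorem to the bipartite graph between $W_1^*$ and $W_2^*$ to produce $U_1, U_2$ of size $t$ and complete the blowup.

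For the first stage, the $\varepsilon$-regularity with $\varepsilon = \alpha^2/(8k^2)$ of each pair $(W_i, W_j)$ for $ij \in E(H)$ provides the needed density control. The goal is to find $(U_3, \ldots, U_k)$ such that (a) they form the required partial blowup; (b) $W_1^*$ and $W_2^*$ have sizes on the order of $n \cdot \prod_{i \geq 3,\, 1i \in E(H)} p_{1i}^t$ and $n \cdot \prod_{i \geq 3,\, 2i \in E(H)} p_{2i}^t$ respectively; and (c) the edge density between $W_1^*$ and $W_2^*$ remains close to $p_{12}$. I would use the regularity-based counting lemma to compute the expected values of these quantities for a random partial blowup, then apply averaging and Markov-type arguments to identify a specific $(U_3, \ldots, U_k)$ achieving all three properties up to $O(\alpha)$ slack. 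In the second stage, Kővári–Sós–Turán applied to $(W_1^*, W_2^*)$ with density $\geq p_{12} - O(\alpha)$ yields a $K_{t, t}$ whenever $\log(\min(|W_1^*|, |W_2^*|)) \gtrsim t \log(1/p_{12})$; substituting the size bounds from stage one and solving for $t$ recovers the claimed value $t = (\prod_{ij \in E(H) \setminus \{1,2\}} p_{ij} - \alpha) \log n / \log(1/p_{12})$.

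The main obstacle will be the first stage: naively picking vertices greedily while trying to maintain $\varepsilon$-regularity pointwise would break down after only $O(1)$ iterations, since the restricted sets shrink to below the $\varepsilon$-fraction threshold of regularity within a constant number of rounds. The route around this is to exploit the quadratic $\alpha^2$ factor in the choice of $\varepsilon$, which supplies enough slack to absorb the accumulated approximation errors across all $\Theta(\log n)$ rounds when one works with aggregate quantities like total edge counts and typical neighborhood sizes, rather than attempting step-by-step density preservation. Concretely, I expect the averaging step will need to be set up so that a single inequality encapsulating the regularity error is invoked once (against the full $\alpha$ tolerance), rather than iterated $t$ times, and the delicate parameter balancing in the statement (in particular the constraint $\alpha < \prod_{ij \in E(H)} p_{ij}$) is what allows this tradeoff to close.
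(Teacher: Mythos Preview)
Your Stage~1 contains a real gap. Producing sets $U_3,\ldots,U_k$ of size $t=\Theta(\log n)$ that already form a canonical blowup of $H[\{3,\ldots,k\}]$ is, whenever $H[\{3,\ldots,k\}]$ has an edge, essentially the lemma itself for a smaller graph. The counting lemma plus averaging will certify many canonical \emph{copies} of a fixed subgraph, but it does not produce a logarithmic-size \emph{blowup}; converting positive copy-density into a $\Theta(\log n)$ blowup is precisely what this lemma is meant to establish, so appealing to it here is circular. ``Working with aggregate quantities so that a single regularity inequality is invoked once'' does not close this: no averaging over individual copies (or over random $t$-tuples of vertices) forces $t$ of them to align into a common blowup. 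As a secondary issue, even granting Stage~1, your final arithmetic does not recover the stated $t$: from $|W_1^*|\approx n\prod_{i\ge3,\,1i\in E(H)}p_{1i}^{\,t}$ the K\H ov\'ari--S\'os--Tur\'an constraint $t\lesssim \log|W_1^*|/\log(1/p_{12})$ solves to $t\lesssim \log n\big/\sum_{j:\,1j\in E(H)}\log(1/p_{1j})$, which is a different expression from $(\prod_{ij\ne 12}p_{ij})\log n/\log(1/p_{12})$.

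The paper's proof runs in the \emph{opposite} order, and that reversal is the key idea you are missing. It first takes nearly all of $W_1$ as a set of ``good'' single vertices, applies K\H ov\'ari--S\'os--Tur\'an between $W_1$ and $W_2$ to drop from linear size to $t_2=(1-\delta)\log n/\log(1/p_{12})$, and only afterwards extends into $W_3,\ldots,W_k$ one part at a time. The extension step does not attempt to preserve regularity of restricted sets; instead it carries along a perfect matching $M_i$ of ``good'' copies of $H_i$ (each with many extensions into every later $W_j$) and uses pigeonhole in an auxiliary bipartite graph between $M_i$ and $W_{i+1}$: since $t_i\le t_2$, there are at most $2^{t_i}\le n^{1-\delta}$ subsets of $M_i$ of size $t_{i+1}$, while there are at least $n$ candidate extending vertices in $W_{i+1}$, so some $t_{i+1}$-subset receives at least $n^\delta\ge t_{i+1}$ common extensions. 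The product $\prod_{ij\ne 12}p_{ij}$ then emerges because each such step shrinks $t_i$ by a factor of roughly $q_{i+1}=\prod_{\ell<i+1,\,\ell(i+1)\in E(H)}p_{\ell(i+1)}$.
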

\begin{rem}
	Note that by relabelling the vertices of $H$, we can exclude any $p_{ij}$ we want from the product and instead replace the factor $(\log \frac1{p_{12}})^{-1}$ by $(\log \frac1{p_{ij}})^{-1}$, as long as $p_{ij}\leq \frac{1}{2}$. As $y=(x\log 1/x)^{-1}$ is a decreasing function of $x$ for $x \in [0,1/e]$ and is bounded for $x\in [1/e,1/2]$, this result is strongest, up to an absolute constant factor, when we pick $p_{12}$ to be the minimum of the $p_{ij}$. 
\end{rem}

\begin{proof}[Proof of Lemma \ref{lem:nikiforov-variant}]
In a blowup $H[t]$, we call $t$ vertex-disjoint copies of $H$ a {\it perfect matching of copies of $H$}. 
Let $H_i$ be the subgraph of $H$ induced on the first $i$ vertices, and for $j>i$, let $N_{i}(j)$ denote the set of neighbors $\ell$ of vertex $j$ in graph $H$ with $\ell \leq i$. We let $\textrm{deg}(i)$ denote the degree of vertex $i$ in $H$. We also set $\varepsilon=\alpha^2/(8k^2)$ and $\delta=8 k \varepsilon/(p_{12}\log \frac{1}{p_{12}})$; observe that both $\delta$ and $\varepsilon$ are in $(0,1)$ and do not depend on $n$. Finally, let $q_i=\prod_{\ell \in N_{i-1}(i)}p_{\ell i}$ for $2 \leq i \leq k$, and let $t_1=(1-\deg(1)\varepsilon)|W_1|$, $t_2=(1-\delta)\log n/\log \frac{1}{p_{12}}$, and $t_{i}=\left(q_i-k \varepsilon\right)t_{i-1}$ for $3\leq i\leq k$.

A copy of $H_i$ in $\Gamma$ is {\it canonical} if the copy of vertex $j$ is in $W_j$ for $j \leq i$. A copy of $H_i$ in $\Gamma$ is {\it good} if it is canonical and for each $j>i$, the number of extensions of this copy of $H_i$ to a copy of the induced subgraph $H[\{1,\ldots,i\} \cup \{j\}]$ with the copy of vertex $j$ in $W_j$ is at least $(\prod_{\ell \in N_i(j)}(p_{\ell j}- \varepsilon))|W_j|$.

We prove by induction on $i$ for $1\leq i\leq k$ that we can find a copy of $H_i[t_i]$ which contains a perfect matching $M_i$ of copies of $H_i$, each of which is good. Observe that by regularity, for any $ij\in E(H)$ and subset $W_j'\subseteq W_j$ with $|W_j'|\geq \varepsilon |W_j|$, the number of vertices in $W_i$ with less than $(p_{ij}-\varepsilon)|W_j'|$ neighbors in $W_j'$ is less than $\varepsilon |W_i|$. So, all but at most $(\deg(i)-|N_{i-1}(i)|)\varepsilon |W_i|$ vertices in $W_i$ have degree at least $(p_{ij}-\varepsilon)|W_j'|$ to $W_j'$ for all neighbors $j>i$. In particular, applying this observation with $i=1$ and $W_j'=W_j$ for all $j$ yields that $W_1$ contains at least $t_1=(1-\deg(1)\varepsilon)|W_1|$ good vertices (i.e. good copies of $H_1$), which together trivially form a perfect matching $M_1$. This proves the base case $i=1$ of our induction.

For the inductive step, assume that our claim has been shown for a given $i$. Fix a copy $L_i$ of $H_i$ in the perfect matching $M_i$ of good copies of $H_i$. For $j>i$, let $W_{j,i}$ denote the subset of vertices in $W_j$ which together with $L_i$ form induced copies of $H[\{1,\ldots,i\} \cup \{j\}]$. Since $L_i$ is good, we have $|W_{j,i}| \geq (\prod_{\ell \in N_i(j)}(p_{\ell j}- \varepsilon))|W_j|$ for each $i<j \leq k$. A vertex $v$ in $W_{i+1,i}$ together with $L_i$ form a good copy of $H_{i+1}$ so long as $v$ has degree at least $(p_{(i+1)j}-\varepsilon)|W_{j,i}|$ to $W_{j,i}$ for each neighbor $j>i+1$ of $i+1$. Applying the regularity observation above with $W_j'=W_{j,i}$, we conclude that the number of such $v$ is at least 
\begin{align*}|W_{i+1,i}|-&(\deg(i+1)-|N_i(i+1)|)\varepsilon|W_{i+1}| \\
&\geq  \left(\left(\prod_{\ell \in N_{i}(i+1)}(p_{\ell (i+1)}-\varepsilon)\right)-(\deg(i+1)-|N_i(i+1)|)\varepsilon\right)|W_{i+1}| \\ 
& \geq  \left(\prod_{\ell \in N_i(i+1)}p_{\ell (i+1)}-\deg(i+1)\varepsilon\right)|W_{i+1}| \\ & \geq  \left(q_{i+1}-k\varepsilon\right)|W_{i+1}|  .\end{align*}

Consider the auxiliary bipartite graph $B$ with parts $M_i$ and $W_{i+1}$, where a copy $L_i$ of $H_i$ in $M_i$ and a vertex $w \in W_{i+1}$ are adjacent if $L_i$ together with $w$ form a good copy of $H_{i+1}$.  In $B$, each vertex in $M_i$ has degree at least $\left(q_{i+1}-k \varepsilon\right)|W_{i+1}|$, and hence $B$ has edge density at least $\rho:=q_{i+1}-k \varepsilon$. For the rest of the argument, we split into two cases to deal with the smallest case separately:

\emph{Case 1: $i+1=2$}. In this case, $M_1$ is actually a subset of $W_1$. By adding back in the remaining vertices of $W_1$ as disconnected vertices, we can view $B$ as a bipartite subgraph of $\Gamma$ between $W_1$ and $W_2$, with edge density at least $\rho \frac{|M_1|}{|W_1|}=(1-\deg(1)\varepsilon)(p_{12}-k\varepsilon)\geq p_{12}-2k\varepsilon$. Then, by deleting vertices of lowest degree from each part one at a time, we can find an induced subgraph with exactly $n$ vertices in each part and edge density at least $p_{12}-2k \varepsilon$ between its parts. The K\H ovari--S\'os--Tur\'an theorem \cite{KoSoTu} implies that a $K_{r,r}$-free bipartite graph where both parts have $n$ vertices has at most $(r-1)^{1/r}n^{2-1/r}+(r-1)n$ edges. Let $r=t_2=(1- \delta)\log_{1/p_{12}}n$. Observe that
\[
	\left( \frac rn \right) ^{1/r} \leq \left( \frac{\log_{\frac1{p_{12}}} n}{n} \right)^{1/r} = \exp \left[ \left(\log \frac 1{p_{12}}\right) \left( \frac{\log \log n-\log \log (1/p_{12})}{(1- \delta)\log n}- \frac{1}{1- \delta} \right)  \right] \leq p_{12}^{1+3 \delta/4}
\]
for $n$ sufficiently large in terms of $p_{12}$. Also for $n$ sufficiently large, we have that $r/n \leq k \varepsilon$. By the definition of $\delta$, we see that $p_{12}^{3 \delta/4} = e^{-6k \varepsilon/p_{12}} \leq 1-3k \varepsilon/p_{12}$, using the inequality $e^{-x} < 2^{-x} \leq 1-x/2$ for $x \in [0,1]$. Therefore, we find that
\begin{align*}
	(r-1)^{1/r}n^{2-1/r}+(r-1)n&< \left( \left( \frac rn \right) ^{1/r}+\frac rn \right) n^2 \leq \left( p_{12}^{1+3 \delta/4}+k \varepsilon \right) n^2 \leq (p_{12}-2k \varepsilon)n^2.
\end{align*}
Thus, $B$ contains a $K_{r,r}$, since it has a bipartite subgraph with $n$ vertices in each part and at least $(p_{12}-2k \varepsilon)n^2$ edges. This $K_{r,r}$ corresponds to a canonical $H_2[t_2]$ in $\Gamma$, all of whose edges are good; we finish by choosing any perfect matching $M_2$ inside this $H_2[{t_2}]$.

\emph{Case 2: $i+1 > 2$}. In this case, the average degree of vertices in $W_{i+1}$ in $B$ is at least $\rho t_i = t_{i+1}$. For a given vertex $w \in W_{i+1}$, letting $\textrm{deg}_B(w)$ denote the degree of $w$ in graph $B$, there are exactly $\binom{\textrm{deg}_B(w)} {t_{i+1}}$ pairs $(w,S)$ with $w \in W_{i+1}$ and $S$ is a subset of $M_i$ of size $t_{i+1}$ and in $B$ the vertex $w$ is adjacent to all vertices in $S$. So the total number of such pairs, ranging over all vertices $w \in W_{i+1}$, is $\sum_{w \in W_{i+1}} \binom{\textrm{deg}_B(w)}{t_{i+1}}$. Define the convex function $f$ by \[f(x)=\begin{cases}\binom x{t_{i+1}} & \text{if }x \geq t_{i+1}-1\\0 & \text{if } x < t_{i+1}-1\end{cases},\] which agrees with $\binom x{t_{i+1}}$ if $x$ is a nonnegative integer. Applying Jensen's inequality to $f$, we see that there are at least $n$ pairs $(w,S)$, where $S$ is a subset of $M_i$ of size $t_{i+1}$ and in $B$ the vertex $w$ is adjacent to all vertices in $S$. The number of subsets $S$ of $M_i$ of size $t_{i+1}$ is $\binom{t_i}{t_{i+1}} \leq 2^{t_i} \leq 2^{t_2}\leq n^{1- \delta}$, so there is such a set $S \subset M_i$ for which at least $n/n^{1- \delta}=n^{\delta} \geq t_{i+1}$ vertices $w$ are adjacent to all vertices in $S$ in the bipartite graph $B$, as long as $n$ is large enough so that $n^\delta \geq \log_{1/p_{12}} n \geq t_{i+1}$. These $t_{i+1}$ copies of $H_i$ together with $t_{i+1}$ such vertices $w \in W_{i+1}$ form the vertex set of a copy of $H_{i+1}[t_{i+1}]$ which has a matching $M_{i+1}$ of good copies of $H_{i+1}$  which extends the matching $M_i$ of good copies of $H_i$. Thus in either case we get a copy of $H_{i+1}[t_{i+1}]$ with the desired properties. This completes the induction proof.

Hence, we get a copy of $H_{k}[t_k]=H[t]$ with 
	\begin{align*}
		t_k&=\left(q_k-k \varepsilon\right)\left(q_{k-1}-k \varepsilon\right) \dotsb \left(q_3-k \varepsilon\right)t_2 \\
		&\geq \left(\prod_{ij \in E(H) \setminus \{1,2\}}p_{ij}-k(k-2) \varepsilon\right)\left(1- \delta\right) \frac{\log n}{\log \frac{1}{p_{12}}} \\
		&\geq \left({\prod_{ij \in E(H) \setminus \{1,2\}}}p_{ij}-k^2 \varepsilon- \delta\right) \frac{\log n}{\log \frac 1{p_{12}}} \\
		&\geq\left({\prod_{ij \in E(H) \setminus \{1,2\}}}p_{ij}-\alpha\right) \frac{\log n}{\log \frac 1{p_{12}}} ,
	\end{align*}
where the last step uses that $k^2 \varepsilon=\alpha^2/8 \leq \alpha/6$ and that
\[
	\delta=\frac{8 k \varepsilon}{p_{12}\log \frac{1}{p_{12}}}=\alpha \cdot \frac{\alpha}{k p_{12}\log \frac{1}{p_{12}}} \leq \alpha\cdot \frac{p_{12}}{k p_{12}\log \frac{1}{p_{12}}} \leq \frac {5\alpha}6,
\]
since $\alpha < \prod p_{ij}\leq p_{12}$ and $k\log \frac{1}{p_{12}}\geq 2\log 2 > \frac{6}{5}$. This is precisely the blowup we were looking for.
\end{proof}

Finally, we will need a standard counting lemma in Section \ref{sec:nikiforov}.
\begin{lem}[See e.g.\ {\cite[Theorem 3.30]{Zhao}}]\label{thm:counting}
	Let $H$ be a graph with $V(H)=[k]$, and let $\Gamma$ be a graph with disjoint vertex subsets $W_1,\ldots,W_k$. Suppose that $(W_i,W_j)$ is $\varepsilon$-regular for all $ij \in E(H)$. Let $N(H)$ denote the number of canonical copies of $H$ in $\Gamma$, i.e.\
	\[
		N(H)=|\{(w_1,\ldots,w_k) \in W_1 \times \dotsb \times W_k: w_i w_j \in E(\Gamma) \text{ for all }ij \in E(H)\}|.
	\]
	Then
	\[
		\left\lvert N(H)-\prod_{ij \in E(H)} d(W_i,W_j) \cdot \prod_{i=1}^k |W_i|\right\rvert \leq \varepsilon|E(H)|  \prod_{i=1}^k |W_i|.
	\]
\end{lem}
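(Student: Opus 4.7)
The plan is a standard induction that peels off one piece of $H$ at a time, using $\varepsilon$-regularity to approximate counts by products of densities. I would induct on $|V(H)|=k$, with trivial base case $k=1$ (where $N(H)=|W_1|$ and the empty edge product equals $1$). For the inductive step, pick any vertex of $H$---relabel so it is vertex $k$---and decompose
\[
N(H) = \sum_{w_k \in W_k} N^{(w_k)}_{H-k},
\]
where $N^{(w_k)}_{H-k}$ counts canonical copies of $H-k$ in $W_1,\ldots,W_{k-1}$ whose vertex in $W_j$ is a $\Gamma$-neighbor of $w_k$ for every $j$ in the $H$-neighborhood $N_H(k)$. Equivalently, this is the number of canonical copies of $H-k$ after replacing each $W_j$ with $j\in N_H(k)$ by $W_j^{w_k} := N_\Gamma(w_k)\cap W_j$.

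By the $\varepsilon$-regularity of each pair $(W_j,W_k)$ with $j\in N_H(k)$, all but at most $\deg_H(k)\,\varepsilon|W_k|$ vertices $w_k$ are \emph{typical} in the sense that $|W_j^{w_k}| = (d(W_j,W_k)\pm\varepsilon)|W_j|$ for every $j\in N_H(k)$; this is exactly the elementary regularity computation performed at the start of the proof of Lemma \ref{lem:nikiforov-variant}. For typical $w_k$ the restricted parts $W_j^{w_k}$ are a nontrivial fraction of $W_j$, so the relevant pairs remain regular with a slightly weaker parameter, and the inductive hypothesis applies to $H-k$ on the restricted parts. Summing over typical $w_k$, using $|W_j^{w_k}|\approx d(W_j,W_k)|W_j|$, and bounding the atypical contribution trivially by $\deg_H(k)\varepsilon|W_k|\prod_{i<k}|W_i|$, recovers the main term $\prod_{ij\in E(H)}d(W_i,W_j)\prod_i|W_i|$ with a per-vertex additive error of $O(\deg_H(k)\,\varepsilon\prod_i|W_i|)$; telescoping over the $k$ vertex deletions gives the $\varepsilon|E(H)|$ coefficient.

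The main obstacle will be keeping the bookkeeping tight: naively the error compounds because the slicing step slightly degrades the regularity parameter at each level, so one either formulates the induction with a marginally stronger regularity hypothesis and absorbs constants at the end, or switches to an edge-by-edge \emph{hybrid argument} that avoids the issue altogether. In the hybrid version, one fixes an ordering of $E(H)$ and, one edge at a time, replaces each indicator $\mathbf{1}[w_iw_j\in E(\Gamma)]$ in the formula for $N(H)$ by the constant $d(W_i,W_j)$; applying the defect form of $\varepsilon$-regularity with the remaining $k-2$ coordinates fixed shows each replacement changes the sum by at most $\varepsilon\prod_i|W_i|$, so after $|E(H)|$ steps the accumulated error is at most $\varepsilon|E(H)|\prod_i|W_i|$, matching the claim exactly.
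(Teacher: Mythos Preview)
The paper does not actually prove this lemma: it is stated with an external citation (to Theorem 3.30 in Zhao's lecture notes) and then used as a black box in Section~\ref{sec:nikiforov}. So there is no in-paper argument to compare your proposal against.

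On its own merits, your proposal is correct. The edge-by-edge hybrid argument you outline at the end is exactly the standard proof and delivers the sharp constant $\varepsilon|E(H)|$ with no loss: fixing the $k-2$ coordinates not in the current edge $\{i,j\}$ restricts $w_i$ and $w_j$ to subsets $W_i'\subseteq W_i$, $W_j'\subseteq W_j$, and the ``defect form'' of $\varepsilon$-regularity (namely $|e(W_i',W_j')-d(W_i,W_j)|W_i'||W_j'||\le\varepsilon|W_i||W_j|$ for \emph{all} subsets, which follows from the paper's definition by splitting into the cases $|W_i'|\ge\varepsilon|W_i|$ and $|W_i'|<\varepsilon|W_i|$) bounds each replacement by $\varepsilon\prod_\ell|W_\ell|$. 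Your first sketch, the vertex-peeling induction, is also a valid route, but as you correctly flag it degrades the regularity parameter at each step and does not directly recover the constant $\varepsilon|E(H)|$; the hybrid argument is the cleaner choice here.
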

\begin{rem}
	Usually, the counting lemma is stated for the number of homomorphisms from $H$ to $\Gamma$, which might be larger by a lower-order term than the number of copies of $H$. However, since we require $W_1,\ldots,W_k$ to be disjoint, these quantities actually coincide. 
\end{rem}

\section{A new proof of Nikiforov's theorem}\label{sec:nikiforov}
Using Lemma \ref{lem:nikiforov-variant}, we can prove a version of Theorem \ref{thm:nikiforov-original} with stronger quantitative dependence in its parameters. Specifically, in this section, we prove the following theorem.
\begin{thm}
If $0<\eta<e^{-1}$, $H$ is a graph on $k$ vertices, and $\lambda=\frac{\eta^{1-1/|E(H)|}}{5\log \frac 1 \eta}$, then the following holds for all $n$ sufficiently large. If $G$ is a graph on $n$ vertices containing at least $\eta n^k$ labeled copies of $H$, then $G$ contains a blowup $H[t]$, where $t=\lambda \log n$. 
\end{thm}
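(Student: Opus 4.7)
My plan is to combine the weak regularity lemma with Lemma \ref{lem:nikiforov-variant}; the improved exponent $1 - 1/|E(H)|$ comes from a careful AM-GM argument that exploits the freedom to choose which edge of $H$ plays the role of $\{1,2\}$ in Lemma \ref{lem:nikiforov-variant}.

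First, I would view $G$ as a $k$-partite graph by duplicating $V(G)$ into $k$ identical parts $V_1, \dotsc, V_k$; up to a negligible $O(n^{k-1})$ correction for tuples with repeated coordinates, this view contains at least $\eta n^k$ canonical copies of $H$. Apply Lemma \ref{lem:dlr} with $r = 1$ and a regularity parameter $\varepsilon$ chosen polynomial in $\eta$ and $1/k$. By Lemma \ref{thm:counting} and averaging (weighted by cylinder volume), some cylinder $K = W_1 \times \dotsb \times W_k$ with $|W_i| \geq \beta n$ satisfies $P := \prod_{ij \in E(H)} d(W_i, W_j) \geq \eta(1 - o(1))$, with every pair $(W_i, W_j)$ for $ij \in E(H)$ being $\varepsilon$-regular.

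Inside $K$, I apply Lemma \ref{lem:nikiforov-variant}, exploiting two freedoms: which edge $ij^*$ serves as $\{1,2\}$, and the flexibility that any $p_{12} \leq \min(d(W_{i^*}, W_{j^*}), 1/2)$ is admissible. Setting $p_{12} = \min(d_{ij^*}, 1/2)$ (to keep $\log(1/p_{12})$ small) and $p_{ij} = d_{ij} := d(W_i, W_j)$ for $ij \neq ij^*$, Lemma \ref{lem:nikiforov-variant} yields $t \gtrsim P \log n / \phi(d_{ij^*})$, where $\phi(d) := d \log(1/\min(d, 1/2))$. Choosing $ij^*$ to minimize $\phi(d_{ij})$, and using $\phi(d) \leq d \log(2/d)$ for every $d \in (0, 1]$, we bound the minimum via the simple inequality min $\leq$ geometric mean, followed by AM-GM on the logarithms:
\[
	\min_{ij \in E(H)} \phi(d_{ij}) \leq \left(\prod_{ij \in E(H)} d_{ij} \log(2/d_{ij})\right)^{1/|E(H)|} \leq P^{1/|E(H)|}\left(\log 2 + \frac{\log(1/P)}{|E(H)|}\right).
\]
For $P \leq 1/2$ this is at most $2 P^{1/|E(H)|} \log(1/P)$, so $t \geq (1 - o(1)) P^{1-1/|E(H)|} \log n / (2 \log(1/P))$. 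Since $P^{1-1/|E(H)|}/\log(1/P)$ is increasing in $P$ and $P \geq \eta(1-o(1))$, this beats the target $\lambda \log n = \eta^{1-1/|E(H)|} \log n / (5 \log(1/\eta))$ comfortably, since $1/2$ is larger than the required $1/5$.

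The main obstacle is quantitative bookkeeping: selecting $\alpha$ (from Lemma \ref{lem:nikiforov-variant}), $\varepsilon$, and $\beta$ (from Lemma \ref{lem:dlr}) small enough that all $1 - o(1)$ losses---including the cost of replacing $\log(\beta n)$ by $\log n$---fit inside the gap between $1/2$ and $1/5$. A secondary issue is the regime $P > 1/2$, where the AM-GM bound degrades because $\log(1/P) \leq \log 2$; however, here using $p_{12} = 1/2$ in Lemma \ref{lem:nikiforov-variant} directly gives $t \gtrsim \log n / \log 2$, which still exceeds $\lambda \log n$ since $\eta < e^{-1}$ forces $\lambda < 1/(5\log(1/\eta)) \leq 1/5 < 1/(2 \log 2)$.
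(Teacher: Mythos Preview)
Your high-level plan (weak regularity plus Lemma~\ref{lem:nikiforov-variant}) is exactly the paper's, and your AM--GM computation is correct. Two points deserve comment.

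\textbf{The duplication step has a gap.} When you view $G$ as $k$-partite by taking $k$ labeled copies of $V(G)$, the copy of $H[t]$ produced by Lemma~\ref{lem:nikiforov-variant} has parts $T_1,\dots,T_k$ that, once projected back to $V(G)$, need not be pairwise disjoint. For edges $ij\in E(H)$ disjointness is forced (no loops), but for non-edges nothing prevents $T_i\cap T_j\neq\emptyset$, so you do not directly get an $H[t]$ in $G$. You can repair this by greedily extracting disjoint subsets of size $t/k$, but the resulting factor-of-$k$ loss destroys the specific constant $1/5$ you are aiming for. The paper avoids this by first taking a random \emph{equitable partition} of $V(G)$ into $k$ genuine parts of size $n/k$; the expected number of canonical copies is still $\ge\eta\prod|V_i|$, the parts are honestly disjoint, and the only cost is that $\log n$ becomes $\log(\beta n/k)$, which is negligible for large $n$.

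\textbf{The AM--GM is more work than needed.} The paper gets a tighter bound with a one-line observation (already flagged in the remark after Lemma~\ref{lem:nikiforov-variant}): choose the edge of \emph{minimum} density $d_{\min}$ to play the role of $\{1,2\}$. Then ``min $\le$ geometric mean'' gives $d_{\min}\le P^{1/|E(H)|}$ directly, and trivially $\log(1/d_{\min})\le\log(1/P)$, so
\[
t \;\gtrsim\; \frac{P}{d_{\min}\log(1/d_{\min})}\,\log n \;\ge\; \frac{P^{\,1-1/|E(H)|}}{\log(1/P)}\,\log n,
\]
twice your AM--GM bound of $\tfrac{1}{2}P^{1-1/|E(H)|}/\log(1/P)$. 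Your route still clears the $1/5$ threshold once the partition issue above is fixed, but the paper's argument is both shorter and sharper.
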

\begin{proof}
	Let $V(H)=[k]$. Consider an equitable partition of $V(G)$ picked uniformly at random with parts $V_1,\ldots,V_k$, each of size $n/k$. Every labeled copy of $H$ has a probability at least $n^{-k}\prod_{i=1}^k |V_i|$ of being canonical with respect to this partition, namely having vertex $i$ in $V_i$ for all $i \in [k]$. Therefore, by linearity of expectation, there exists a partition $V_1,\ldots,V_k$ with $|V_i| =n/k$ for all $i$ and such that $V_1,\ldots,V_k$ contain at least $\eta\prod_{i=1}^k |V_i|$ canonical copies of $H$.

	Let $F$ be the $k$-partite subgraph of $G$ whose parts are $V_1,\ldots,V_k$ obtained by deleting all edges contained in each $V_i$. We apply Lemma \ref{lem:dlr} to $F$, with $m=k$, $r=1$, and $\varepsilon=\eta^{2k^2}/(8k^2)$. We obtain an $\varepsilon$-regular cylinder partition $\K$ of $V_1 \times \dotsb \times V_k$ with $|V_i(K)| \geq \beta n/k$ for all $i$, where $\beta=\varepsilon^{k^2 \varepsilon^{-5}}$. Notice that if $K \in \K$ is an $\varepsilon$-regular cylinder, then the counting lemma implies that the number of canonical copies of $H$ in $K$ is at most
	\[
		\left(\prod_{ij \in E(H)} d(V_i(K),V_j(K)) + \varepsilon |E(H)|\right)  \prod_{i=1}^k |V_i(K)|.
	\]
	Moreover, recall that at most an $\varepsilon$-fraction of the tuples in $V_1 \times \dotsb \times V_k$ are in non-$\varepsilon$-regular cylinders, and in particular at most $\varepsilon \prod_{i=1}^k |V_i|$ canonical copies of $H$ are in such cylinders. Adding these two facts up over all cylinders in $\K$, we find that the total number of canonical copies of $H$ in $F$ is at most
	\[
		\varepsilon \prod_{i=1}^k |V_i|+ \sum_{K\text{ regular}}\left(\prod_{ij \in E(H)} d(V_i(K),V_j(K)) + \varepsilon |E(H)|\right)  \prod_{i=1}^k |V_i(K)|.
	\]
	On the other hand, we know that the number of such copies is at least $\eta\prod_{i=1}^k |V_i|$. Therefore, there must exist an $\varepsilon$-regular cylinder $K$ in the cylinder partition for which
	\[
		\prod_{ij \in E(H)} d(V_i(K),V_j(K))+\varepsilon |E(H)| \geq \eta -\varepsilon.
	\]
	Fixing such a $K$, let $W_i=V_i(K)$, and let $\Gamma$ be the subgraph of $G$ induced on $W_1 \cup \dotsb \cup W_k$. We know that each part of $\Gamma$ has size at least $\beta n/k$. Suppose without loss of generality that $d(W_1,W_2)$ is minimum among all $d(W_i,W_j)$, and let $p_{12}=\min(d(W_1,W_2),\frac{1}{2})$ and $p_{ij}=d(W_i,W_j)$ for all other $ij\in E(H)$. Then by Lemma \ref{lem:nikiforov-variant} (assuming $n$, and thus $\beta n/k$, is sufficiently large), we find that $\Gamma$ contains a copy of $H[t]$, where 
	\[
		t=\left({\prod_{ij \in E(H) \setminus \{1,2\}}}p_{ij}-\alpha\right) \frac{\log (\beta n / k)}{\log \frac 1{p_{12}}},
	\]
	and $\alpha=\sqrt{8\varepsilon k^2}=\eta^{k^2}$. Let $P=\prod_{ij\in E(H)} p_{ij}$. We have $P\geq \frac{1}{2} \prod_{ij\in E(H)}d(V_i(K),V_j(K)) \geq \frac{1}{2} (\eta - (|E(H)|+1)\varepsilon) > \frac{9}{20}\eta > 9 \alpha$ and $P\leq p_{12}\leq P^{1/|E(H)|}$, so for $n$ sufficiently large in terms of $\eta$, we can bound
	\begin{align*}
        t &\geq \log (\beta n / k) \frac{P-p_{12}\alpha}{p_{12}\log \frac{1}{p_{12}}} \geq \log (\beta n / k) \frac{P - \alpha}{P^{1/|E(H)|}\log \frac{1}{P}} \\
        & \geq (\log n + \log (\beta / k))\frac{(P-\frac{1}{9}P) P^{-1/|E(H)|}}{\log \frac{1}{P}} \\
        & \geq \left(\frac{9}{10}\log n\right)\frac{8}{9}\frac{ (\frac{9}{20}\eta)^{1-1/|E(H)|}}{\log \frac{1}{\eta}+\log \frac{20}{9}}
        \geq \log n \frac{9}{10}\frac{8}{9}\frac{9}{20}\frac{ \eta^{1-1/|E(H)|}}{\frac{9}{5}\log \frac{1}{\eta}} \\
        & \geq \frac{\eta^{1-1/|E(H)|}}{5\log \frac 1 \eta} \log n,
	\end{align*}
	as claimed.
\end{proof}
\begin{rem}
	In contrast with Nikiforov's result, where he assumes a bound on the number of unlabeled copies, we work here with labeled copies, which allows us to pick an $\eta$ which is a factor the number of automorphisms of $H$ larger.  

Also, just as in Nikiforov's original proof of Theorem \ref{thm:nikiforov-original}, we can use the same technique to find an unbalanced blowup of $H$. Namely, for any $c>0$, there is a $0<\lambda'<\lambda$ such that we can find a blowup of $H$ in $G$ where the first $k-1$ parts have size $\lambda' \log n$ and the last part has size $n^{1-c}$. Indeed, this follows directly by examining the proof of Lemma \ref{lem:nikiforov-variant}, which shows that at each step, we can actually pick out $n^{1-c}$ vertices in the second part of the auxiliary graph, as long as $t_i$ is decreased by a sufficiently large constant factor.
\end{rem}

\section{Proof of Theorem \ref{thm:no-g-ub}}\label{sec:upper-bound}
In this section, we prove Theorem \ref{thm:no-g-ub}.
\begin{proof}[Proof of Theorem \ref{thm:no-g-ub}]
	Fix $0<\alpha<r^{-|E(H)|}$ and let $\gamma=\alpha 2r^2$ be the parameter in the theorem statement. Let $m=|V(G)|$ and $k=|V(H)|$. Let $a=a(G,H,r)$ and $b=b(H,r)$ be parameters to be defined later, and let $n=a \cdot b^t$. Let $F=G[n]$, and fix an $r$-coloring $E(F)=E(F_1)\sqcup \dotsb \sqcup E(F_r)$; we wish to show that this coloring contains a monochromatic canonical copy of $H[t]$. We identify $V(G)$ with $[m]$, and let $V_1,\ldots,V_m$ be the parts of $F=G[n]$. We also identify the vertex set of $H$ with $[k]$. 

	Let $\varepsilon=\alpha^2/(8k^2)$ be the parameter from Lemma \ref{lem:nikiforov-variant}. We apply Lemma \ref{lem:dlr} with parameters $r,m$ and $\varepsilon$. Then we obtain a cylinder partition $\K$ of $V_1 \times \dotsb \times V_m$ which is $\varepsilon$-regular for each of the color classes $F_1,\ldots,F_r$. Fix an $\varepsilon$-regular cylinder $K \in \K$, say $K=W_1 \times \dotsb \times W_m$. By Lemma \ref{lem:dlr}, we have $|W_i| \geq \beta n$ for all $i \in [k]$, where $\beta=\varepsilon^{m^2 \varepsilon^{-5}}$. Define an $r$-coloring of $E(G)$ by coloring the edge $ij$ by the most popular color in $W_i \times W_j$, breaking ties arbitrarily. Since $G \rarrow H$, this $r$-coloring must contain a monochromatic copy of $H$. By renaming the colors and the parts, we may assume without loss of generality that this copy of $H$ is on the vertices $1,\ldots,k$, so that $ij$ is of color $1$ if $ij$ is an edge of $H$.

	Therefore, we find that among all the pairs $(W_i,W_j)$ where $1 \leq i<j \leq k$ and $ij$ is an edge of $H$, we have that color $1$ is the densest color in $(W_i,W_j)$. Let $\Gamma$ be the induced subgraph of $F_1$ on $W_1 \cup \dotsb \cup W_k$. Then we know that each pair $(W_i,W_j)$ with $ij$ an edge of $H$ is $\varepsilon$-regular in $\Gamma$ (since the cylinder $K$ was $\varepsilon$-regular in each color) and satisfies $d_\Gamma(W_i,W_j) \geq p$, where $d_\Gamma$ denotes the edge density in $\Gamma$, and $p=1/r$. Since $\alpha< r^{-|E(H)|}=p^{|E(H)|}$, we may apply Lemma \ref{lem:nikiforov-variant} with all $p_{ij}$ equal to $p$ to find a canonical blowup $H[t^*]$ (which is monochromatic), where
	\[
		t^* = \frac{p^{|E(H)|-1}-\alpha}{\log \frac 1p}  \log (\beta n)= \frac{r^{1-|E(H)|}-\alpha}{\log r}  \log (\beta n).
	\]
	Now, we define $a=a(G,H,r)=1/\beta$ and $b=b(H,r)=r^{r^{|E(H)|-1}(1+\alpha r^{|E(H)|})}$, so that
	\begin{align*}
		t^*&=\frac{r^{1-|E(H)|}-\alpha}{\log r} \log (b^t) = t \left[ (r^{1-|E(H)|}-\alpha)(r^{|E(H)|-1}(1+\alpha r^{|E(H)|})) \right] \\
		& =t \left[ (1- \alpha r^{|E(H)|-1})(1+\alpha r^{|E(H)|}) \right] = t\left[ 1 + \alpha r^{|E(H)|}(1 - r^{-1} - \alpha r^{|E(H)|-1}) \right] \\
		&\geq t\left[1 + \alpha r^{|E(H)|}(1 - 2r^{-1})\right] \\
		& \geq t.\qedhere
	\end{align*}
\end{proof}

\section{Concluding remarks}
In addition to eliminating the unnecessary dependence on $G$ in the exponential constant of $\B(G \rarrow H;t)$, Theorem \ref{thm:no-g-ub} also provides quite good bounds on the exponential constant in many instances. For instance, Souza's results \cite{Souza} imply the bounds 
\[
	2^t \leq \B(K_6 \rarrow[2] K_3;t) \leq e^{(3.3 \times 10^7)t},
\]
and he asked whether the upper bound could be made more reasonable. Theorem \ref{thm:no-g-ub} implies
\[
	\B(K_6 \rarrow[2] K_3;t) \leq 2^{(4+o(1))t}=(16+o(1))^t.
\]
Moreover, the same bound holds for $\B(G \rarrow[2] K_3;t)$ for any graph $G$ with $G \rarrow[2] K_3$, as long as the $o(1)$ term above is allowed to depend on $G$. We expect that the upper bound can be improved further using some of our techniques, but such an improvement would likely require some new ideas.

The most natural question left open by Theorem \ref{thm:no-g-ub} is whether the dependence on $G$ can be entirely eliminated, or whether $\B(G \rarrow H;t)$ must depend on $G$. Unlike Souza, we believe the latter to be the case, and make the following conjecture.
\begin{conj}\label{conj:lb-with-g}
	There exists a graph $H$ and integers $r,t \geq 2$ for which the following holds. There exist graphs $G_1,G_2,\ldots$ such that $G_i \rarrow H$ for all $i$ and $\sup_i \B(G_i \rarrow H;t)=\infty$. 
\end{conj}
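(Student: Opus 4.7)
The plan is to construct, for some fixed $H$, $r$, and $t$, a sequence $G_1, G_2, \ldots$ of $H$-Ramsey graphs with $|V(G_i)| \to \infty$ together with an explicit or probabilistic $r$-coloring of $G_i[n_i]$ avoiding monochromatic canonical copies of $H[t]$, where $n_i \to \infty$. A natural first target is $H = K_3$, $r = 2$, and a small value of $t$, with the $G_i$ being Folkman-type graphs: $K_4$-free graphs with $G_i \rarrow K_3$ and unbounded vertex count. The motivation is that such graphs have triangles that are ``spread out'' in the sense that no edge lies in too many triangles, so the canonical copies of $K_3[t]$ in $G_i[n]$ cluster into weakly-interacting groups, one per triangle of $G_i$.

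Given this family, the first attempt would be via the Lov\'asz Local Lemma applied to a uniformly random $r$-coloring of $G_i[n]$. Each canonical copy of $H[t]$ is a bad event with probability roughly $r^{1 - |E(H)| t^2}$, and two such events are dependent only when they share an edge of $G_i[n]$, which forces them to share an edge of the underlying copies of $H$ in $G_i$. If copies of $H$ in $G_i$ are sufficiently spread out---for instance, if each edge of $G_i$ lies in $o(|V(G_i)|)$ copies of $H$---the LLL dependency degree should grow slowly enough that a valid coloring exists for $n$ tending to infinity with $|V(G_i)|$. An alternative approach is to build $G_i$ via iterated lifts from a base Ramsey graph $G_0$ and construct the colorings inductively, using the lifting structure to extend a seed coloring of $G_0[n]$ consistently.

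The main obstacle, and presumably the reason this conjecture remains open, is that Souza's LLL-based lower bound already gives $\B(G \rarrow H; t) \geq c'(H,r)^t$ with no dependence on $G$, and naive applications of the LLL only recover this bound: the number of canonical copies of $H[t]$ is proportional to the number of copies of $H$ in $G$, so a sparser $G_i$ translates straightforwardly into fewer bad events but also translates into comparably fewer constraints, and the dependence on $|V(G)|$ tends to cancel. To genuinely beat Souza's bound one must argue that some global combinatorial feature of $G_i$---for instance, a suitable analogue of Ramsey multiplicity counting the number of ``essential'' copies of $H$ needed for the Ramsey property---both grows with $|V(G_i)|$ and provably lower-bounds $\B(G_i \rarrow H; t)$ for fixed $t$. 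Identifying such an invariant, and proving that it simultaneously controls both the Ramsey-criticality of $G_i$ and the complexity of colorings avoiding canonical $H[t]$ in $G_i[n]$, appears to be the crux of the problem; without it, the probabilistic route is likely to stall at the same exponent that Souza already achieved.
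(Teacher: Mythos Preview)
The statement you were asked to prove is labeled a \emph{Conjecture} in the paper, and the paper does not provide a proof; it explicitly leaves the question open and even sharpens it to the case $H=K_3$, $r=t=2$. So there is no paper proof to compare your attempt against.

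Your writeup is not a proof but a research outline, and you yourself diagnose its central failure: the Lov\'asz Local Lemma applied to a uniformly random coloring of $G_i[n]$ only recovers Souza's $G$-independent lower bound, because the dependency degree and the number of bad events scale together with the copy count of $H$ in $G_i$. That diagnosis is correct, and it is exactly why the statement remains conjectural. The Folkman-graph idea and the iterated-lift idea are reasonable heuristics for where to look, but neither is developed to the point of producing a coloring of $G_i[n_i]$ with $n_i\to\infty$ avoiding monochromatic canonical $H[t]$. In short, your proposal accurately surveys the difficulty but does not resolve it, which is the expected outcome for an open conjecture.
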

\noindent We even conjecture this holds with $H$ is a triangle and $r=t=2$.   
\begin{conj}
	For every $s$, there exists a graph $G$ such that $G \rarrow[2] K_3$ but $G[s] \nrarrow[2] K_3[2]$. 
\end{conj}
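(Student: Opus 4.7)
The plan is to construct, for each $s$, a graph $G = G_s$ with $G \rarrow[2] K_3$ that admits a 2-coloring of $G[s]$ with no monochromatic canonical $K_3[2]$. The guiding principle is to exploit the tension between the global Ramsey property of $G$ and the local freedom to color each bipartite pair of $G[s]$, using a sparse Ramsey graph $G$ so that the local choices interact only weakly.

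First, I would take $G$ to be a Folkman-type graph, that is, a $K_4$-free graph with $G \rarrow[2] K_3$, or more generally a sparse Ramsey graph obtained from a higher-clique-free Folkman construction, or from a pseudorandom/random Ramsey graph such as $G(N, c N^{-1/2})$ at the R\"odl--Ruci\'nski threshold. The crucial structural feature one wants is that any two triangles share at most one edge and that the number of triangles through any fixed edge is small, so that the bipartite pairs $K(V_u, V_v)$ inside $G[s]$ can be colored with only limited global interaction.

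Next, I would color each bipartite pair $K(V_u, V_v)$ in $G[s]$ at random from a carefully chosen distribution and apply the Lov\'asz Local Lemma. For each triangle $T = uvw$ in $G$ and each triple of 2-subsets $(A, B, C) \in \binom{V_u}{2} \times \binom{V_v}{2} \times \binom{V_w}{2}$, the bad event $E_{T,A,B,C}$ is that the induced $K_{2,2,2}$ on $A \cup B \cup C$ is monochromatic; the goal is to show that with positive probability no such event occurs.

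The main obstacle is quantitative. A uniform random coloring gives $\Pr[E_{T,A,B,C}] = 2^{-11}$, while the dependency degree of each event is of order $d \cdot s^4$, where $d$ bounds the number of triangles through an edge of $G$; this yields LLL only for $s = O(1)$. More seriously, for $s \geq 5$ the K\H{o}v\'ari--S\'os--Tur\'an theorem forces every 2-coloring of $K_{s,s}$ to contain a monochromatic $K_{2,2}$, so one cannot hope to avoid monochromatic $K_{2,2}$'s in each bipartite pair individually. The hardest step is therefore to prevent monochromatic $K_{2,2}$'s from \emph{aligning} on the same 2-subsets across the three pairs of any triangle; this likely requires either a structured non-uniform coloring (for instance built from incidence geometries, projective planes, or pseudorandom hypergraphs), or a Ramsey graph $G$ so sparse that the alignment constraints decouple across triangles.
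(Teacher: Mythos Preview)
The statement you are attempting to prove is stated in the paper as a \emph{conjecture}, not a theorem; the paper provides no proof, so there is nothing to compare your proposal against. Your task description appears to have misidentified the nature of the statement.

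As for the content of your proposal: it is not a proof but a research outline, and you yourself correctly identify the decisive gap. The Lov\'asz Local Lemma with a uniform random coloring gives a dependency degree of order $d \cdot s^4$ against a bad-event probability of $2^{-11}$, which can only succeed for bounded $s$, whereas the conjecture asks for \emph{every} $s$. Your observation that for $s \geq 5$ every $2$-coloring of $K_{s,s}$ already contains a monochromatic $K_{2,2}$ shows that one cannot kill bad events locally on each bipartite pair; one must control the \emph{alignment} of monochromatic $K_{2,2}$'s across the three pairs of each triangle. At that point your proposal becomes speculative: ``structured non-uniform colorings from incidence geometries'' and ``Ramsey graphs so sparse that alignment constraints decouple'' are directions, not constructions, and you give no argument that either can be made to work. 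In particular, making $G$ sparser does not obviously help, since the alignment problem lives on a single triangle and is already present once $s$ is moderately large regardless of how few triangles $G$ has or how they overlap. Until a concrete coloring scheme and a concrete choice of $G_s$ are specified together with a verification that no monochromatic canonical $K_3[2]$ arises, this remains an open problem, which is exactly the status the paper assigns it.
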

For certain graphs $H$ and integers $r$, Conjecture \ref{conj:lb-with-g} does not hold, and $\B(G \rarrow H;t)$ can be bounded by an exponential function independent of $G$. One example of such graphs, as observed by Souza, are the $r$-Ramsey-finite graphs. Let $\M_r(H)$ denote the set of all $G$ which are minimal with respect to the property $G \rarrow H$, i.e.\ all graphs $G$ with $G \rarrow H$ but $G' \nrarrow H$ for any proper subgraph $G'$ of $G$. $H$ is called \emph{$r$-Ramsey-finite} if $|\M_r(H)|<\infty$, and $r$-Ramsey-infinite otherwise. If $H$ is $r$-Ramsey-finite, then $\B(G \rarrow H;t) \leq c^t$ for a constant $c$ that does not depend on $G$; indeed, we may find such a $c$ by taking the maximum $c$ from Theorem \ref{thm:souza-ub} over all $G \in \M_r(H)$. 

However, there is at least one Ramsey-infinite graph $H$ (namely the path $P_3$ with two edges) for which Conjecture \ref{conj:lb-with-g} fails to hold and further $\B(G \rarrow[2] H;t) \leq c^t$ for all $G$ with $G \rarrow[2] H$ where $c$ does not depend on $G$. Indeed, $\M_2(P_3)$ is infinite, consisting of $K_{1,3}$ and the odd cycles. Equivalently, $G\rarrow[2] P_3$ if and only if $G$ has a vertex of degree at least $3$ or $G$ contains an odd cycle $C_{2\ell+1}$. If $G$ has a vertex of degree at least $3$, then $\B(G \rarrow[2] P_3;t) \leq \B(K_{1,3} \rarrow [2] P_3;t)$, so we can use the same upper bound for all such $G$. On the other hand, it is a simple exercise to show that for each $\varepsilon>0$ there is $\delta>0$ such that if a $2$-edge-coloring of $P_4[n]$ has at most $\delta n^3$ monochromatic canonical $P_3$, then, apart from at most $\varepsilon n^2$ edges, the coloring is monochromatic between consecutive parts and alternates color along the path. In particular, taking $\varepsilon = 1/3$, if a $2$-edge-coloring of $C_{2\ell+1}[n]$ does not contain $\frac{\delta}{2}n^3$ monochromatic canonical $P_3$ between any three consecutive parts, then the most common color used between consecutive pairs of parts alternates along the cycle, contradicting that an odd cycle is nonbipartite. That is, every $2$-edge-coloring of $C_{2\ell+1}[n]$ must contain at least $\frac{\delta}{2}n^3$  monochromatic canonical $P_3$ between some three consecutive parts. Applying Nikiforov's theorem between these three consecutive parts, there is a monochromatic canonical copy of $P_3[t]$ with $t = \Omega(\log n)$ and the implicit constant is absolute.  Hence, although $P_3$ is not $2$-Ramsey-finite, there is still an absolute constant $c$ such that $\B(G \rarrow[2] P_3;t) \leq c^t$ for all $G$ with $G \rarrow[2] P_3$. 

Souza defined the \emph{robustness} $\beta_r(H;G)$ to be the minimum number of monochromatic copies of $H$ in an $r$-coloring of $G$, divided by the total number of copies of $H$ in $G$. Thus, $\beta_r(H;G)$ measures the fraction of copies of $H$ that must be monochromatic in any $r$-coloring of $G$. He also showed, again as a consequence of Theorem \ref{thm:souza-ub}, that if $\inf \{\beta_r(H;G):G \in \M_r(H)\}>0$, then Conjecture \ref{conj:lb-with-g} fails to hold for $H$. If $H$ is $r$-Ramsey-finite, then this infimum is certainly positive, so we recover the above observation that Conjecture \ref{conj:lb-with-g} fails for $r$-Ramsey-finite graphs. Moreover, Souza \cite[Conjecture~5.4]{Souza} conjectured that these two observations are in fact the same, namely that $\inf \{\beta_r(H;G):G \in \M_r(H)\}>0$ if and only if $H$ is $r$-Ramsey-finite. Indeed, this conjecture is true.
\begin{prop}
	If $H$ is $r$-Ramsey-infinite, then $\inf \{\beta_r(H;G):G \in \M_r(H)\}=0$.
\end{prop}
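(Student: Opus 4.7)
The plan is to use the minimality of graphs $G \in \M_r(H)$ to produce, for each such $G$ with many edges, an $r$-coloring in which all monochromatic copies of $H$ share a single edge, and then to bound the fraction of copies of $H$ passing through a cleverly chosen edge by a straightforward averaging argument.

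First, I would observe that if $H$ is $r$-Ramsey-infinite, then $\M_r(H)$ must contain graphs with arbitrarily many edges. Any $G \in \M_r(H)$ has no isolated vertex, since deleting one would yield a proper subgraph that is still Ramsey, contradicting minimality. Because only finitely many graphs exist on any fixed number of vertices, $|\M_r(H)| = \infty$ forces a sequence $G_i \in \M_r(H)$ with $|V(G_i)| \to \infty$ and hence $|E(G_i)| \geq |V(G_i)|/2 \to \infty$.

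Second, for each $G \in \M_r(H)$ and each edge $e \in E(G)$, minimality gives $G - e \nrarrow H$, so one can fix an $r$-coloring $\chi_e$ of $G - e$ with no monochromatic copy of $H$. Extending $\chi_e$ to $G$ by assigning $e$ an arbitrary color yields an $r$-coloring of $G$ in which every monochromatic copy of $H$ must use $e$: otherwise, the copy would lie entirely in $G - e$ and contradict the choice of $\chi_e$. Thus $\beta_r(H; G)$ is at most the fraction of copies of $H$ in $G$ that contain $e$. Double-counting pairs (copy of $H$, edge of that copy) shows that summing this count over all $e \in E(G)$ equals $|E(H)|$ times the total number of copies of $H$ in $G$, so by averaging some edge $e$ is contained in at most a $|E(H)|/|E(G)|$ fraction of all $H$-copies. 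Choosing this $e$ gives
\[
\beta_r(H; G) \leq \frac{|E(H)|}{|E(G)|}.
\]

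Applied along the sequence $G_i$, this immediately yields $\beta_r(H; G_i) \to 0$, so the infimum is indeed zero. Since $G \rarrow H$ implies $G$ contains a copy of $H$, the denominator defining $\beta_r(H;G)$ is positive throughout, so the ratio is well-defined. I do not expect any real obstacle here: the proof is a one-shot averaging on top of the standard observation that minimal Ramsey graphs have no isolated vertices, which forces the edge count to blow up along an infinite family in $\M_r(H)$.
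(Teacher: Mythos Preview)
Your argument is correct and follows essentially the same route as the paper: pick an edge $e$ lying in at most an $|E(H)|/|E(G)|$ fraction of the copies of $H$ by averaging, use minimality to color $G-e$ with no monochromatic $H$, and then let $|E(G)|\to\infty$ along the infinite family $\M_r(H)$. The paper sharpens your bound by a factor of $r$ (by assigning $e$ the color that completes the fewest monochromatic copies, yielding $\beta_r(H;G)\le |E(H)|/(r\,|E(G)|)$), and is slightly more careful about isolated vertices---your claim that every $G\in\M_r(H)$ has none fails when $H$ itself has isolated vertices, but the paper's observation that $G$ can have at most as many isolated vertices as $H$ suffices for $|E(G_i)|\to\infty$ just the same.
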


This proposition follows from the next lemma and $\sup \{e(G):G \in \M_r(H)\}=\infty$ if $H$ is $r$-Ramsey-infinite; this fact follows from the observation that a Ramsey-minimal graph for $H$ can have at most as many isolated vertices as $H$ itself, so the number of edges of $G$ must tend to infinity as $G$ runs over the infinite set $\M_r(H)$. 

\begin{lem}
If $G$ is Ramsey minimal for $H$ with $r$ colors, then $\beta_r(H;G) \leq \frac{e(H)}{re(G)}$. 
\end{lem}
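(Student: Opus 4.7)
The plan is to exploit Ramsey-minimality on an edge-by-edge basis, combined with a double-counting argument over edges and an averaging argument over colors. The key object will be, for each edge $e \in E(G)$, a witness coloring $c_e$ of $G-e$ with no monochromatic $H$, whose existence is guaranteed by the minimality hypothesis.

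First, I would fix for each $e \in E(G)$ an $r$-coloring $c_e$ of $G - e$ with no monochromatic copy of $H$, and denote by $N_e$ the number of copies of $H$ in $G$ that use the edge $e$. A standard incidence count gives
\[
    \sum_{e \in E(G)} N_e \;=\; e(H) \cdot N(H),
\]
where $N(H)$ is the total number of copies of $H$ in $G$. Hence there exists an edge $e^* \in E(G)$ with $N_{e^*} \leq e(H) N(H)/e(G)$.

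Next, I would extend $c_{e^*}$ to a coloring of all of $G$ by assigning some color $i \in [r]$ to $e^*$. Since $c_{e^*}$ has no monochromatic $H$ in $G - e^*$, every monochromatic copy of $H$ in the extended coloring must contain $e^*$ and be of color $i$. For $i \in [r]$, let $N_{e^*}^{(i)}$ be the number of copies of $H$ in $G$ containing $e^*$ all of whose other edges are colored $i$ under $c_{e^*}$. For each copy of $H$ through $e^*$, the remaining edges can be monochromatic in at most one color, so
\[
    \sum_{i=1}^r N_{e^*}^{(i)} \;\leq\; N_{e^*}.
\]
By averaging, some color $i^* \in [r]$ satisfies $N_{e^*}^{(i^*)} \leq N_{e^*}/r$.

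Finally, the coloring of $G$ obtained by using $c_{e^*}$ on $G - e^*$ and color $i^*$ on $e^*$ has at most $N_{e^*}/r \leq e(H)N(H)/(r\,e(G))$ monochromatic copies of $H$, which gives $\beta_r(H;G) \leq e(H)/(r\,e(G))$ by definition of $\beta_r$. There is no serious obstacle here; the only point that requires minor care is verifying that for each copy of $H$ containing $e^*$ the ``missing'' edges $c_{e^*}$-colors form a well-defined count, which follows immediately since we only take one color-extension of $c_{e^*}$ at a time.
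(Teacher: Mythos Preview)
Your proposal is correct and follows essentially the same argument as the paper: find an edge $e^*$ lying in at most an $e(H)/e(G)$ fraction of the copies of $H$ (you phrase this as double counting, the paper as picking a uniformly random edge and using linearity of expectation), use Ramsey-minimality to color $G-e^*$ with no monochromatic $H$, and then average over the $r$ choices of color for $e^*$. The only cosmetic difference is that you fix witness colorings $c_e$ for every edge in advance, whereas the paper only invokes minimality once, for the chosen edge.
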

\begin{proof}
	If we fix a copy of $H$ in $G$ and then pick an edge of $G$ uniformly at random, the probability that it lands in this copy is exactly $e(H)/e(G)$. Therefore, by linearity of expectation, there exists an edge $e \in E(G)$ such that $e$ lies in at most an $e(H)/e(G)$ fraction of the copies of $H$ in $G$. Since $G$ is Ramsey-minimal for $H$, we can color $G-e$ so that it contains no monochromatic copy of $H$. We then color $e$ according to which color it would participate in the least number of monochromatic copies of $H$. 
We thus find that the total fraction of copies of $H$ that are monochromatic is at most $\frac{e(H)}{re(G)}$, since every such copy must contain $e$ and there are $r$ colors. Thus, $\beta_r(H;G) \leq \frac{e(H)}{re(G)}$. 
\end{proof}

It is natural to modify the definition of blowup Ramsey numbers to allow for non-canonical copies. More precisely, we can define
\[
	\B'(G,H,r,t)=\min \{n:G[n] \rarrow H[t]\}.
\]
Note that $\B(G\rarrow H;t)$ is finite if and only if $G \rarrow H$; the if direction was proven by Souza, while the only if follows from blowing up any coloring of $G$ with no monochromatic copy of $H$. However, $\B'(G,H,r,t)$ can be finite for all $t$ even if $G\nrarrow H$. Indeed, let's say that $G \homrarrow H$ if every $r$-edge-coloring of $G$ contains a monochromatic homomorphic image of $H$, where we say that $H'$ is a homomorphic image of $H$ if it can be gotten from $H$ by repeatedly identifying non-adjacent vertices. In this case, a sufficiently large blowup of $H'$ will contain a copy of $H$. Therefore we can conclude that $\B'(G,H,r,t)$ is finite if and only if $G\homrarrow H$, where the only if direction follows by blowing up a coloring of $G$ containing no monochromatic homomorphic image of $H$. Moreover, we thus find that 
\[
	\B'(G,H,r,t) \leq \B(G \rarrow H;t) \leq \B'(G,H,r,ct),
\]
where $c=c(H) \geq 1$ is a constant depending on how small a homomorphic image of $H$ can be. Thus, $\frac 1t \log \B(G \rarrow H;t)$ and $\frac 1t \log \B'(G,H,r,t)$ differ only by a constant factor depending on $H$.


\begin{thebibliography}{1}
\providecommand{\url}[1]{\texttt{#1}}
\providecommand{\urlprefix}{URL }
\providecommand{\eprint}[2][]{\url{#2}}

\bibitem{DuLeRo}
R.~A. Duke, H.~Lefmann, and V.~R\"{o}dl, A fast approximation algorithm for
  computing the frequencies of subgraphs in a given graph, \emph{SIAM J.
  Comput.} \textbf{24} (1995), 598--620.

\bibitem{FoLi}
J.~Fox and R.~Li, On edge-ordered {Ramsey} numbers, 2019. Preprint available at
  arXiv:1906.08234.

\bibitem{KoSoTu}
T.~K\"{o}vari, V.~T. S\'{o}s, and P.~Tur\'{a}n, On a problem of {K}.
  {Z}arankiewicz, \emph{Colloq. Math.} \textbf{3} (1954), 50--57.

\bibitem{Nikiforov2}
V.~Nikiforov, Graphs with many copies of a given subgraph, \emph{Electron. J.
  Combin.} \textbf{15} (2008), 1--6.

\bibitem{Nikiforov1}
V.~Nikiforov, Graphs with many {$r$}-cliques have large complete {$r$}-partite
  subgraphs, \emph{Bull. Lond. Math. Soc.} \textbf{40} (2008), 23--25.

\bibitem{Souza}
V.~Souza, Blowup {Ramsey} numbers, 2019. Preprint available at
  arXiv:1910.13912.

\bibitem{Zhao}
Y.~Zhao, Lecture notes on graph theory and additive combinatorics, 2017.
  \url{http://yufeizhao.com/gtac/gtac.pdf}.

\end{thebibliography}
\end{document}